\newcommand{\bbbr}{\mathbb{R}}
\begin{document}

\newtheorem{theorem}{Theorem}[section]
\newtheorem{corollary}[theorem]{Corollary}
\newtheorem{lemma}[theorem]{Lemma}
\newtheorem{proposition}[theorem]{Proposition}

\theoremstyle{definition}
\newtheorem{definition}{Definition}[section]
\newtheorem{example}[definition]{Example}

\theoremstyle{remark}
\newtheorem{remark}{Remark}[section]

\title{Differentiable conjugacies for one-dimensional maps}
\author[$\dagger$]{P.A.~Glendinning}
\author[$\ddagger$]{D.J.W.~Simpson}
\affil[$\dagger$]{Department of Mathematics, University of Manchester, Manchester, UK}
\affil[$\ddagger$]{School of Mathematical and Computational Sciences, Massey University, Palmerston North, New Zealand}

\maketitle


\begin{abstract}

Differentiable conjugacies link dynamical systems that share properties such as the stability multipliers
of corresponding orbits. It provides a stronger classification
than topological conjugacy, which only requires qualitative similarity. We describe some of the techniques
and recent results that allow differentiable conjugacies to be defined for standard bifurcations, and explain 
how this leads to a new class of normal forms.
Closed-form expressions for differentiable conjugacies exist between some chaotic maps,
and we describe some of the constraints that make it possible to recognise when such
conjugacies arise. This paper focuses on the consequences
of the existence of differentiable conjugacies
rather than the conjugacy classes themselves.

\end{abstract}

\section{Dynamic conjugacies}

Let $A$ and $B$ be manifolds (in almost everything we do below they are subsets of the real line).
Maps $f:A\to A$ and $g:B\to B$ are conjugate if there exists $h:A\to B$ such that
\begin{equation}\label{eq:conj}
h\circ f = g\circ h.
\end{equation}
The map $h$ is called the conjugating function, and the type of conjugacy depends on properties of $h$. For example 
\begin{itemize}
\item if $h$ is a homeomorphism (continuous bijection with continuous inverse) then $f$ and $g$ are \emph{topologically conjugate};
\item if $h$ is a diffeomorphism (continuously differentiable with continuously differentiable inverse) then $f$ and $g$ are \emph{differentiably conjugate}; and
\item if $h$ is a $C^r$ homeomorphism ($r \ge 1$) then $f$ and $g$ are \emph{$C^r$-conjugate}.
\end{itemize}
(Recall that if a diffeomorphism is $C^r$, meaning its first $r$ derivatives exist and are continuous,
then the first $r$ derivatives of its inverse also exist and are continuous \cite{Bl15}.)
The idea of a conjugacy provides a formal way of saying that different dynamical systems have the `same' dynamics.
It is essentially a change of coordinates: given a homeomorphism $h$ and a map $f$, in the new coordinates $y=h(x)$ we have
\[
y_{n+1}=h(x_{n+1})=h(f(x_n))=h(f(h^{-1}(y_n))).
\]
So $y_{n+1}=g(y_n)$ where $g=h \circ f \circ h^{-1}$, which is an alternative way of writing (\ref{eq:conj}).
Conjugacy classes of one-dimensional maps with fixed points can be studied for their own sake, see \cite{Ofarrell2009,Ofarrell2011} for example, but they are also used
as a tool for solving some larger problem at hand.
In this paper we will concentrate
on the applications of conjugacies to bifurcation theory and chaotic dynamics. We will be particularly interested 
in cases where topological conjugacies can be made differentiable, since differentiable conjugacies preserve many more
features of the dynamics.

To see this suppose that two one-dimensional maps $f$ and $g$ are topologically conjugate by a conjugating function $h$.
Suppose $f$ has a fixed point $x^*$, so $x^*=f(x^*)$, and let $y^*=h(x^*)$.
Then (\ref{eq:conj}) implies
\[
y^*=h(x^*)=h (f(x^*)) = g (h(x^*)) =g(y^*),
\]
so $y^*$ is a fixed point of $g$ --- we say it is
the \emph{corresponding} fixed point of $g$.
Moreover, if $f$ and $g$ are differentiable and $h$ is a diffeomorphism then differentiating (\ref{eq:conj}) and evaluating it at $x^*$ gives
\[
h^\prime (f(x^*))f^\prime (x^*)=g^\prime (h(x^*))h^\prime (x^*).
\]
Since $f(x^*)=x^*$ and $h^\prime (x^*)\ne 0$
(since its inverse is $C^1$), we have
$f^\prime (x^*)=g^\prime (y^*)$. Since the derivative determines stability properties of hyperbolic fixed points (those with the modulus of the derivative not equal to one) and in particular the rates of convergence or divergence of nearby orbits, this means that corresponding fixed points of differentiably conjugate maps have the same local quantitative behaviour as well as qualitative behaviour implied by topological conjugacy. This stability analysis is easily extended to periodic orbits, where the stability of a
period-$p$ orbit $\{ x_1, \ldots, x_p \}$
is determined by the \emph{multiplier},
\begin{equation}\label{eq:mult}
\lambda=\prod_{n=1}^p f^\prime (x_n).
\end{equation} 
Since corresponding periodic orbits have the same multipliers for differentiably conjugate maps, it is not easy to find families of maps arising in applications that are both chaotic and differentiably conjugate. This would require that the multipliers of an infinite number of corresponding periodic orbits are equal, which is an infinite set of constraints. It is of course easy to
reverse engineer such families from
a family of differentiable conjugacies, but in section~\ref{sect:chaotic} we will describe families derived from geometric or algebraic constructions
that are differentiably conjugate (in fact $C^\infty$-conjugate).    

The purpose of this paper is to demonstrate
the application of differentiable conjugacies to two areas of dynamical systems 
theory: bifurcations and chaos. In section~\ref{sect:stern} we review the two main technical 
results that will be required. Sternberg's theorem \cite{Sternberg} provides local smooth conjugacies to linear maps, 
while Belitskii's theorem \cite{Belitskii1986} shows how this can be extended to local basins of attraction and repulsion.
In section~\ref{sect:bifn} we introduce the idea of extended normal forms for bifurcation theory \cite{GS2022,GS2023}. 
These are polynomial extensions of the standard truncated normal forms for local bifurcations, where
the additional terms are chosen so that the extended forms are smoothly conjugate to the original system locally on
basins of attraction and repulsion of fixed points.
This possibility is mentioned in \cite{Ily1993}, but the details were not explored there.
In section~\ref{sect:pws} these results are extended to piecewise-smooth maps, with the initially counter-intuitive 
result that under certain conditions distinct piecewise-smooth maps can be smoothly conjugate \cite{GS2023a}. 
Section~\ref{sect:chaotic} considers results for smooth conjugacies in families of maps,
and uses a remark of Misiurewicz \cite{Mi22} to extend the results of \cite{PGSG2021} to a class of maps studied by Umeno \cite{Umeno1997}, thus 
making a connection between smooth conjugacy and exactly solvable chaos.
Finally section~\ref{sect:concl} provides a short conclusion.

\section{The theorems of Sternberg and Belitskii}
\label{sect:stern}

The technical results needed to address the applications to bifurcation theory and chaotic maps
in later sections are due to Sternberg \cite{Sternberg} and Belitskii \cite{Belitskii1986},
with some more recent results to deal with non-hyperbolic \cite{Ofarrell2011,Young1997} and orientation-reversing \cite{Ofarrell2009} cases.
The differentiable equivalence of hyperbolic fixed points comes from Sternberg \cite{Sternberg}, but we state the result 
in a slightly different form since by restricting to $C^r$ functions with $r \ge 2$ the conjugacy is also $C^r$ \cite{Young1997}
rather than $C^{r-1}$ as in the original statement of \cite{Sternberg}.

\begin{theorem}\label{thm:S}\cite{Sternberg}~Suppose $f:\bbbr \to\bbbr$ is $C^r$ ($r\ge 2$) and $f(0)=0$ with $f^\prime (0)=\lambda$ and $\lambda \notin\{-1,0, 1 \}$.
Then there are open neighbourhoods $U$ of $x=0$ and $V$ of $y=0$ such that $f(x)$ on $U$ is $C^{r}$-conjugate to $g(y) = \lambda y$ on $V$.
\end{theorem}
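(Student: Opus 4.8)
The plan is to construct the conjugating function as an explicit limit of rescaled iterates of $f$, after first stripping off the low-order nonlinear terms of $f$ by a polynomial change of coordinates. Since the construction uses only $|\lambda|$, we may assume $|\lambda|<1$: if $|\lambda|>1$ we replace $f$ by its local inverse $f^{-1}$, which is again $C^r$ with a $C^r$ inverse and has $(f^{-1})'(0)=\lambda^{-1}$, and we note that any $h$ with $h\circ f^{-1}=\lambda^{-1}h$ also satisfies $h\circ f=\lambda h$, so conjugating $f^{-1}$ to $y\mapsto\lambda^{-1}y$ conjugates $f$ to $y\mapsto\lambda y$. No separate treatment of the orientation-reversing case $\lambda<0$ is needed below.

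\emph{Step 1: polynomial pre-normalisation.} Because $\lambda\in\mathbb{R}\setminus\{-1,0,1\}$ is not a root of unity, $\lambda^{m-1}\ne 1$, hence $\lambda-\lambda^m\ne 0$, for every integer $m\ge 2$. I would use this to solve the homological equations order by order: if $f(x)=\lambda x+c_mx^m+O(x^{m+1})$, then conjugating by $P_m(x)=x+\alpha_m x^m$ with $\alpha_m=-c_m/(\lambda-\lambda^m)$ removes the degree-$m$ coefficient without altering those of lower degree. Composing $P_2,\dots,P_r$ produces a polynomial (hence $C^\infty$) local diffeomorphism $P$ with $P(0)=0$ and $P'(0)=1$ such that $\tilde f:=P^{-1}\circ f\circ P$ has the form $\tilde f(x)=\lambda x+R(x)$ with $R\in C^r$ and $R^{(j)}(0)=0$ for $0\le j\le r$; in particular $|R^{(j)}(x)|\le C_j|x|^{r-j}$ for $x$ near $0$ and each $j\le r$.

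\emph{Step 2: the Koenigs limit, and conclusion.} Fix $\mu$ with $|\lambda|<\mu<|\lambda|^{1/r}$ --- possible precisely because $|\lambda|<1$ and $r\ge 2$ --- and $\delta>0$ so small that $|\tilde f(x)|\le\mu|x|$ on $I=[-\delta,\delta]$; then $\tilde f(I)\subseteq I$ and $|\tilde f^n(x)|\le\mu^n|x|$. Set $h_n(x)=\lambda^{-n}\tilde f^n(x)$, so that $h_n\circ\tilde f=\lambda\,h_{n+1}$ and $h_{n+1}(x)-h_n(x)=\lambda^{-(n+1)}R(\tilde f^n(x))$. From $|R(y)|\le C|y|^r$ we obtain $|h_{n+1}(x)-h_n(x)|\le(C\delta^r/|\lambda|)(\mu^r/|\lambda|)^n$, which is summable by the choice of $\mu$, so $h_n\to h$ uniformly on $I$ and $h\circ\tilde f=\lambda h$. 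For higher regularity I would write $h_n'=\exp\bigl(\sum_{k=0}^{n-1}\log(1+R'(\tilde f^k(x))/\lambda)\bigr)$ and check, using the bounds on $R^{(j)}$ from Step 1 and the estimates $|(\tilde f^k)^{(i)}(x)|\le C'_i|\lambda|^k$ for $i\le r-1$ (proved by induction on $k$ from $\tilde f^k=\tilde f\circ\tilde f^{k-1}$), that the exponent converges in $C^{r-1}(I)$; hence $h_n'\to h'$ in $C^{r-1}(I)$, so $h\in C^r(I)$, and $h_n'(0)=\lambda^{-n}(\tilde f^n)'(0)=1$ gives $h'(0)=1\ne 0$. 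By the inverse function theorem $h$ restricts to a $C^r$-diffeomorphism of a neighbourhood $U_0\ni 0$ onto $V=h(U_0)$; shrinking to $U\subseteq U_0$ with $\tilde f(U)\subseteq U$, the relation $h\circ\tilde f=\lambda h$ exhibits $\tilde f|_U$ as $C^r$-conjugate to $y\mapsto\lambda y$ on $V$, and composing with $P$ transfers this to $f$ on the neighbourhood $P(U)$.

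\emph{Main obstacle.} Everything except the $C^r$-smoothness of $h$ is routine, and that smoothness is the crux. Applied directly to $f$, the Koenigs scheme yields only a $C^1$ conjugacy, because without pre-normalisation the second derivatives $h_n''$ need not remain bounded; Step 1 --- which is exactly where the hypothesis $\lambda\notin\{-1,0,1\}$ is used --- forces $R$ to vanish to order $r$, and this is what makes the higher-order estimates of Step 2 converge. Carrying the bootstrapping estimates on the derivatives of $\tilde f^n$ all the way to order $r$, rather than stopping one short, needs a careful modulus-of-continuity argument at the top order; this refinement, due to \cite{Young1997}, is what upgrades Sternberg's original $C^{r-1}$ conclusion to the $C^r$ conjugacy stated here.
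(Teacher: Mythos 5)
Your proposal is correct and follows essentially the route the paper itself points to: the paper states Theorem~\ref{thm:S} by citation only, describing Sternberg's proof as an analysis of $f^n$ as $n\to\infty$, which is exactly your polynomial pre-normalisation followed by the Koenigs limit $\lambda^{-n}\tilde f^n$, with the reduction of $|\lambda|>1$ to $|\lambda|<1$ via the inverse. You also correctly identify that the passage from $C^{r-1}$ to $C^r$ at the top order is the delicate point and attribute it to \cite{Young1997}, matching the paper's own remark.
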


For $|\lambda|<1$ Sternberg's proof \cite{Sternberg} of this result is based on the analysis of the 
behaviour of $f^n$ as $n \to \infty$ 
(the argument is essentially the same if $|\lambda |>1$ in reverse time).
In \cite{SternbergContract} Sternberg gives an
alternative proof based on an iterative method for the existence of the conjugacy.

Now let $f$ be a map satisfying the conditions of Theorem \ref{thm:S},
and $g$ be another map satisfying the same conditions.
That is, $0$ is a fixed point of both maps and $f'(0) = g'(0) \notin \{ -1, 0, 1 \}$.
Then from two applications of Theorem \ref{thm:S} we
can conclude that $f$ and $g$ are locally $C^r$-conjugate.
Belitskii's theorem \cite{Belitskii1986} shows this
result can be extended to basins of attraction or repulsion of fixed points.
The following lemma indicates the flavour of the general result of Belitskii.

\begin{lemma}Suppose $a<0<b$ and $f:[a,b]\to [a,b]$ is a strictly increasing $C^r$ ($r\ge 2$) map with $f(a)=a$, $f(b)=b$, $f(0)=0$, and $f^\prime (0)=\lambda$ with $0<\lambda <1$.
Further suppose $f$ has no other fixed points on $[a,b]$, so appears as in Fig.~\ref{fig:b}.
Also suppose $\tilde{a} <0 <\tilde{b}$ and $g:[\tilde{a},\tilde{b}]\to [\tilde{a},\tilde{b}]$
is a strictly increasing $C^r$ map with the same properties at corresponding points.
Then $f$ on $(a,b)$ is $C^r$-conjugate to $g$ on $(\tilde{a},\tilde{b})$.
\label{le:Belitskii}
\end{lemma}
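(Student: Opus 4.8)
The plan is to build the conjugacy $h:(a,b)\to(\tilde a,\tilde b)$ piece by piece, using Sternberg's theorem (Theorem \ref{thm:S}) near the fixed point $0$ and then propagating the conjugacy outward along orbits by the functional equation. First I would apply Theorem \ref{thm:S} to both $f$ and $g$: there are neighbourhoods $U$ of $0$ and $V$ of $0$ and $C^r$ diffeomorphisms $\phi:U\to W$ and $\psi:V\to W'$ (onto neighbourhoods of $0$) conjugating $f$ and $g$ respectively to the same linear map $y\mapsto\lambda y$. Shrinking, we may assume $\phi$ and $\psi$ have a common image interval, so that $h_0=\psi^{-1}\circ\phi$ is a $C^r$ diffeomorphism from a neighbourhood $N\ni 0$ into $(\tilde a,\tilde b)$ satisfying $h_0\circ f=g\circ h_0$ there. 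Since $0<\lambda<1$, the point $0$ is an attractor, and because $f$ has no other fixed point in $[a,b]$ and is strictly increasing, every $x\in(a,b)$ has $f^n(x)\to 0$; in particular for each $x$ there is a smallest $n=n(x)\ge 0$ with $f^n(x)\in N$ (after possibly shrinking $N$ to an interval $(c,d)$ with $f((c,d))\Subset(c,d)$, $n(x)$ is well-defined and locally constant off a discrete set). The same holds for $g$.

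The key step is to \emph{extend} $h_0$ to all of $(a,b)$ by the rule
\begin{equation}\label{eq:hext}
h(x) = g^{-n}\bigl(h_0(f^{n}(x))\bigr),\qquad n=n(x),
\end{equation}
and to check this is well-defined, bijective, and $C^r$. Well-definedness on overlaps reduces to the identity $h_0\circ f = g\circ h_0$ on $N$, which lets one increase $n$ by one without changing the value; that $g$ is invertible on $[\tilde a,\tilde b]$ (strictly increasing, continuous) makes $g^{-n}$ meaningful. Bijectivity follows because $f$ and $g$ are increasing bijections of their closed intervals fixing the endpoints, so $f^n$ maps $(a,b)$ onto $(a,b)$ and the core neighbourhood pulls back onto all of $(a,b)$; combined with $h_0$ being a bijection onto its image, $h$ is a strictly increasing bijection $(a,b)\to(\tilde a,\tilde b)$. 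Smoothness is where the hyperbolicity ($\lambda\neq 1$) and the $C^r$ hypothesis are used: on the open set where $n(x)$ is locally constant, $h$ is a composition of $C^r$ maps ($g^{-n}$, $h_0$, $f^n$), hence $C^r$; at the "seams" where $n$ jumps, the two local formulas agree to all orders $\le r$ precisely because on the overlap region $h$ equals \emph{both} $g^{-n}\circ h_0\circ f^{n}$ and $g^{-(n+1)}\circ h_0\circ f^{n+1}$, and these are literally the same function there (not merely equal in value) by the conjugacy relation for $h_0$. Thus $h$ is globally $C^r$, and the same argument applied to $h^{-1}=f^{-n}\circ h_0^{-1}\circ g^{n}$ shows $h^{-1}$ is $C^r$.

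Finally, $h\circ f=g\circ h$ on $(a,b)$: for $x\in(a,b)$ with $n(x)=n$ one has $n(f(x))\le n$, and writing both sides via \eqref{eq:hext} and using $h_0\circ f=g\circ h_0$ on $N$ gives the identity; the orientation-preserving hypothesis guarantees $f$ maps the nested intervals into themselves consistently so no monotonicity issue arises. The main obstacle I expect is the smoothness-across-seams argument: one must be careful that $N$ can be chosen so that its forward images tile $(a,b)$ with overlaps on which the conjugacy equation holds \emph{as an identity of functions}, and that $g^{-n}$ is $C^r$ with non-vanishing derivative there — both of which follow from strict monotonicity and $g'(0)=\lambda\neq 0$ together with no interior fixed points, but require a clean choice of fundamental domain. (Belitskii's general theorem packages exactly this bookkeeping; here the one-dimensional, orientation-preserving setting makes it elementary.)
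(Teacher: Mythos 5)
Your proposal is correct and follows essentially the same route as the paper: a local $C^r$ conjugacy from Sternberg's theorem near the attracting fixed point, propagated to the whole basin $(a,b)$ via the functional equation $h = g^{-1}\circ h\circ f$ (your closed form $h = g^{-n}\circ h_0\circ f^{n}$ is just the unrolled version of the paper's iterative extension over fundamental domains $[x_n,x_{n+1}]$). Your observation that the competing local formulas agree \emph{as functions} on overlapping open sets is exactly the right way to dispose of the smoothness-at-the-seams issue that the paper defers to Belitskii.
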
  

\begin{figure}[b!]
\begin{center}
\includegraphics[height=3.6cm]{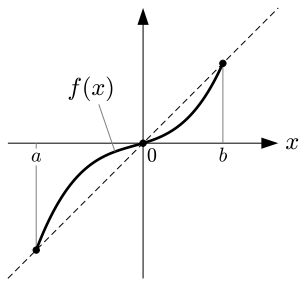}
\caption{
A map satisfying the conditions of Lemma \ref{le:Belitskii}.
\label{fig:b}
} 
\end{center}
\end{figure}

The main idea behind the proof of Lemma \ref{le:Belitskii} is to extend 
Sternberg's conjugacy from a neighbourhood of $x=0$ to the entire interval $(a,b)$,
which is of course the local basin of attraction of the fixed point. Thus we start with $x_0 > 0$ and $y_0 > 0$
such that $[-x_0,x_0]\subset (a,b)$, $[-y_0 ,y_0]\subset (\tilde{a},\tilde{b})$
and $f$ on $[-x_0,x_0]$ is conjugated to $g$ on $[-y_0,y_0]$ by a $C^r$ conjugating function $h$.
That is,
\begin{equation}
h(f(x)) = g(h(x)),
\label{eq:fconj}
\end{equation}
for all $x \in [-x_0,x_0]$.

In order to extend the domain of $h$ towards $b$, we use the backward orbits of $x_0$ and $y_0$ to form sequences
$x_n\to b$ with $f(x_n)=x_{n-1}$
and $y_n\to \tilde{b}$ with $g(y_n)=y_{n-1}$ for all $n \ge 1$.
For any $x \in (x_0,x_1)$, $h$ is defined at $f(x)$,
so we can extend its definition with
\begin{equation}
h(x) = g^{-1}(h(f(x))), \qquad {\rm for~any~} x \in (x_0,x_1).
\nonumber
\end{equation}
By construction the conjugacy relation (\ref{eq:fconj}) holds on the larger domain and is $C^r$ on $(x_0,x_1)$.
To complete the proof it is necessary to show
$h$ is $C^r$ at $x=x_0$ and repeat the construction iteratively to extend $h$ to $[x_n,x_{n+1}]$ for all $n\ge 1$,
and hence to the whole of $[0,b)$.
The argument in $(a,0]$ is similar.
We refer the reader to \cite{Belitskii1986} for details.

\section{Extended Normal Forms}\label{sect:bifn}

Bifurcations are critical parameter values
at which the dynamics of a family of maps undergoes a fundamental (topological) change.
There is a vast theory for bifurcations,
and much of it is based on normal forms \cite{Kuznetsov}.
The basic idea is that a normal form is a family of maps exhibiting the bifurcation
and that can be obtained from any family maps exhibiting the bifurcation through a conjugacy.

For example
$$
g(y,\nu) = y + \nu - y^2,
$$
can be viewed as a normal form for a saddle-node bifurcation because
if an arbitrary family of maps $f(x,\mu)$ has a saddle-node bifurcation,
there exists a homoemorphism $h$ that conjugates it to $g$ locally.
As discussed above, we would of course like $h$ to be differentiable.
However, on the side of the bifurcation where $f$ has two fixed points,
this is only possible if we can match the stability multipliers of both fixed points of $f$
to those of the corresponding fixed points of $g$.
Unless $f$ has a special symmetry this cannot be done because
we cannot tune the single parameter $\nu$ to satisfy both constraints.

However, we can obtain a differentiable conjugacy if we instead
consider the {\em extended normal form}
$$
g(y,\nu,a) = y + \nu - y^2 + a y^3,
$$
which has two parameters, $\nu$ and $a$.
To explain why, suppose $f$ has a saddle-node bifurcation at $(x,\mu) = (0,0)$.
Then
\begin{equation}
f(0,0) = 0, \qquad
\frac{\partial f}{\partial x}(0,0) = 1, \qquad
\frac{\partial f}{\partial \mu}(0,0) > 0, \qquad
\frac{\partial^2 f}{\partial x^2}(0,0) < 0,
\label{eq:saddleNodeAssumptions}
\end{equation}
after substituting $x \mapsto -x$ and/or $\mu \mapsto -\mu$ if necessary to obtain the desired signs.
For small $\mu > 0$, $f$ has two fixed points near $0$, Fig.~\ref{fig:c}.
Via a straight-forward calculation we determine the stability multipliers of these points to be
$$
\lambda^\pm(\mu) = 1 \pm \sqrt{- 2 \textstyle{\frac{\partial f}{\partial \mu} \frac{\partial^2 f}{\partial x^2} \mu}}
- \frac{2 \frac{\partial f}{\partial \mu} \frac{\partial^3 f}{\partial x^3}}{3 \frac{\partial^2 f}{\partial x^2}} \,\mu
+ O \left( \mu^{\frac{3}{2}} \right),
$$
where the derivatives are evaluated at $(x,\mu) = (0,0)$.
Similarly for $\nu > 0$ the map $g$ has two fixed points locally, with stability multipliers
$$
\sigma^\pm(\nu,a) = 1 \pm 2 \sqrt{\nu} + 2 a \nu + O \left( \nu^{\frac{3}{2}} \right).
$$
Thus for $f$ and $g$ to be differentiably conjugate we need
\begin{equation}
\lambda^+(\mu) = \sigma^+(\nu,a), \qquad
\lambda^-(\mu) = \sigma^-(\nu,a).
\label{eq:matchPairsOfMultipliers}
\end{equation}
As shown in \cite{GS2023},
we can use the implicit function theorem to show that
(\ref{eq:matchPairsOfMultipliers}) can indeed be solved for $\nu$ and $a$ locally to obtain the following result.

\begin{figure}[b!]
\begin{center}
\includegraphics[height=3.6cm]{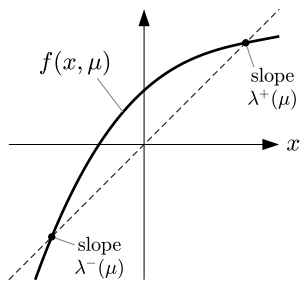}
\caption{
A sketch of a map on one side of a saddle-node bifurcation.
Specifically, $f$ satisfies (\ref{eq:saddleNodeAssumptions}) and is drawn for small $\mu > 0$.
Recall, the stability multiplier of a fixed point of a one-dimensional map
is the slope of the map at the fixed point.
\label{fig:c}
} 
\end{center}
\end{figure}

\begin{theorem}
Suppose $f$ is $C^r$ ($r \ge 4$) and satisfies (\ref{eq:saddleNodeAssumptions}).
Then there exists $\delta > 0$,
neighbourhoods $N$ and $M$ of $0$,
and continuous functions $F, G : (0,\delta) \to \bbbr$ with
\begin{equation}
F(0) = 0, \qquad
G(0) = \frac{2 \frac{\partial^3 f}{\partial x^3}}{3 \left( \frac{\partial^2 f}{\partial x^2} \right)^2} \Bigg|_{(0,0)},
\nonumber
\end{equation}
such that, for all $\mu \in (0,\delta)$, the maps $f(x,\mu)|_N$ and $g(y,F(\mu),G(\mu))|_M$ are $C^{r-1}$-conjugate
on the basins of their corresponding fixed points.
\end{theorem}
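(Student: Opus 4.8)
The plan is to reduce the theorem to two ingredients: (i) solving the multiplier–matching system (\ref{eq:matchPairsOfMultipliers}) for $\nu$ and $a$ as continuous functions of $\mu$, and (ii) invoking Belitskii's theorem, of which Lemma~\ref{le:Belitskii} is indicative, to upgrade the resulting topological conjugacy to a $C^{r-1}$ one on the relevant basins. Under (\ref{eq:saddleNodeAssumptions}), for small $\mu>0$ the map $f(\cdot,\mu)$ has exactly two fixed points near $0$ (a standard consequence of the rescaling $x=\sqrt\mu\,\xi$), one stable and one unstable, both hyperbolic with multipliers $\lambda^\pm(\mu)$ as displayed in the excerpt; likewise $g(\cdot,\nu,a)$ has two fixed points for $\nu>0$ with multipliers $\sigma^\pm(\nu,a)$. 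So it suffices to produce $\delta>0$ and continuous $F,G:(0,\delta)\to\bbbr$ with the stated limits at $0$, with $F>0$, and with $\lambda^\pm(\mu)=\sigma^\pm(F(\mu),G(\mu))$ for all $\mu\in(0,\delta)$, and then to conjugate $f(\cdot,\mu)$ to $g(\cdot,F(\mu),G(\mu))$ pointwise in $\mu$.

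The square roots are handled by rescaling. Put $\mu=t^2$, $\nu=s^2$ with $s,t>0$; the Lyapunov--Schmidt reductions of the fixed-point equations with $x=t\xi$ and $y=s\eta$ make $\Lambda^\pm(t):=\lambda^\pm(t^2)$ and $\Sigma^\pm(s,a):=\sigma^\pm(s^2,a)$ differentiable in their arguments (with a finite loss of derivatives, absorbed by $r\ge 4$), and
\[
\Lambda^\pm(t)=1\pm ct+dt^2+O(t^3),\qquad \Sigma^\pm(s,a)=1\pm 2s+2as^2+O(s^3),
\]
where $c^2=-2\,\frac{\partial f}{\partial\mu}\frac{\partial^2 f}{\partial x^2}>0$ at $(0,0)$ and $d$ is the coefficient of $\mu$ in the expansion of $\lambda^\pm$, so that $2d/c^2$ equals the value of $G(0)$ quoted in the statement. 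Subtracting the two equations of (\ref{eq:matchPairsOfMultipliers}) gives $4s+O(s^3)=2ct+O(t^3)$, which the implicit function theorem solves for $s=S(t,a)=\tfrac{c}{2}t+O(t^3)$. Substituting this into the sum of the two equations, both sides vanish to second order in $t$ (the linear terms cancel), so the relation has the form $t^2\,\Phi(t,a)=0$ with $\Phi$ continuous and $C^1$ in $a$, $\Phi(0,a)=c^2a-2d$, and $\partial\Phi/\partial a\,(0,\cdot)=c^2\neq0$; a second application of the implicit function theorem gives $a=A(t)$ with $A(0)=2d/c^2$. Unravelling, $G(\mu):=A(\sqrt\mu)$ and $F(\mu):=S(\sqrt\mu,A(\sqrt\mu))^2=\left(\tfrac{c}{2}\sqrt\mu+O(\mu)\right)^{2}$ are continuous on some $(0,\delta)$, with $F>0$ there and with $F(\mu)\to0$, $G(\mu)\to 2d/c^2$ as $\mu\to0^+$.

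Finally, fix $\mu\in(0,\delta)$ and put $\nu=F(\mu)>0$, $a=G(\mu)$. Both $f(\cdot,\mu)$ and $g(\cdot,\nu,a)$ are strictly increasing on a neighbourhood of the closed interval spanned by their two fixed points --- since $\frac{\partial f}{\partial x}(0,0)=1>0$ we have $\frac{\partial f}{\partial x}>0$ there after shrinking $\delta$ --- and by construction they share the pair of multipliers at the corresponding hyperbolic fixed points. Theorem~\ref{thm:S} linearises each map near each of its fixed points, and Belitskii's theorem (\cite{Belitskii1986}, the flavour of which is Lemma~\ref{le:Belitskii}) extends one of these local conjugacies across the fundamental domains between the fixed points, producing a $C^{r-1}$ conjugating function on the union of the basin of attraction of the stable fixed point and the basin of repulsion of the unstable one, these basins lying inside fixed neighbourhoods $N$ of $x=0$ and $M$ of $y=0$ (shrink $\delta$ once more so both fixed points lie in them). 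The main obstacle is precisely this extension step: the two Sternberg conjugacies obtained near the two fixed points need not agree, so one must propagate a single one and verify that it remains $C^{r-1}$ as it approaches the opposite fixed point --- and it is the equality of \emph{both} multipliers, secured in the previous step, that makes this gluing possible. The careful bookkeeping of derivatives through the $\sqrt\mu$ rescaling (where $d$ already involves $\frac{\partial^3 f}{\partial x^3}$) is what forces $r\ge 4$ and the loss of one derivative, as carried out in \cite{GS2023}.
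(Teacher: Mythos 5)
Your overall strategy is the same as the paper's (and as the reference \cite{GS2023} it points to): expand the multipliers $\lambda^\pm(\mu)$ and $\sigma^\pm(\nu,a)$, remove the square-root degeneracy by the substitution $\mu=t^2$, $\nu=s^2$, solve the difference of the two equations in (\ref{eq:matchPairsOfMultipliers}) for $s$ and the sum for $a$ by the implicit function theorem, and then pass the matched multipliers to Sternberg--Belitskii. Your bookkeeping is correct: $c^2=-2\frac{\partial f}{\partial\mu}\frac{\partial^2 f}{\partial x^2}>0$ under (\ref{eq:saddleNodeAssumptions}), and $2d/c^2$ does reduce to the stated value of $G(0)$; the identification of the derivative loss with the fact that the implicit function theorem is applied to expressions built from $\frac{\partial f}{\partial x}$ (which is only $C^{r-1}$) is also the reason given in the paper.

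The one genuine problem is your final step. You claim a \emph{single} $C^{r-1}$ conjugating function on the union of the basin of attraction of the stable fixed point and the basin of repulsion of the unstable one, asserting that equality of both multipliers ``makes this gluing possible.'' In general it does not: a single smooth conjugacy on the closed interval between the two fixed points would force the Mather invariants (functional moduli) of the two restricted interval diffeomorphisms to agree, and matching the two endpoint multipliers is far from sufficient for that. This is exactly why the theorem is phrased as conjugacy ``on the basins of their corresponding fixed points'' --- one conjugacy per basin, each basin being an open interval with one fixed point in its interior and the other (at most) on its boundary, exactly the setting of Lemma~\ref{le:Belitskii}. Both multipliers must still be matched, because each basin's conjugacy requires its own fixed point's multiplier to agree and one wants a single parameter pair $(\nu,a)=(F(\mu),G(\mu))$ to serve both basins simultaneously; but the two conjugacies need not, and generically cannot, be merged into one. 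If you replace your gluing claim by two independent applications of Belitskii's lemma (one per basin), the rest of your argument delivers the theorem as stated.
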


Note that in \cite{GS2023} we also show a conjugacy exists for small $\mu \le 0$.
The loss of smoothness 
from $C^r$ to $C^{r-1}$ is due to the fact that the implicit function theorem is applied to a function involving 
the derivatives of $F$ and $G$ which are $C^{r-1}$.

As another example,
$g(y,\nu) = y + \nu y - y^3$
is a normal form for a pitchfork bifurcation,
but again we can usually only obtain a continuous conjugacy.
As shown in \cite{GS2023},
to obtain a differentiable conjugacy it is sufficient to instead use
$g(y,\nu,a,b) = y + \nu y + b \nu y^2 - y^3 + a y^5$.
Here three parameters are needed because on one side of a pitchfork bifurcation there are three fixed points.

The extended normal forms are not unique,
other families of maps can do the same job,
but if we want the extended normal form to be a polynomial with as few terms as possible
the options are rather limited.
For example, in the saddle-node case
we saw that the third derivative of $f$ appears in the $\mu$-coefficient of $\lambda^\pm(\mu)$.
This coefficient and its corresponding one for $g$
are involved in the calculations required to construct $F$ and $G$.
For this reason if we replace $y^3$ in $g$ with a higher power of $y$
we cannot in general solve for $F$ and $G$.

\section{Piecewise-smooth maps}\label{sect:pws}

Piecewise-smooth maps can exhibit a range of bifurcations that are not possible for smooth maps.
In particular, a fixed point can collide with a switching manifold (where the map is non-smooth) giving rise to new dynamics.
Such {\em border-collision bifurcations} have been identified in mathematical models in a wide range of disciplines \cite{Si16}.

The skew tent map family
\begin{equation}
g(y,\nu,s_L,s_R) = \left\{ \begin{array}{l@{~~}l}
\nu + s_L y, & y \le 0, \\
\nu + s_R y, & y \ge 0,
\end{array} \right.
\label{eq:skewTent}
\end{equation}
can in some ways be regarded as a normal form for border-collision bifurcations \cite{DiBu08}.
As the value of $\nu \in \bbbr$ passes through $0$, a border-collision bifurcation occurs
and the resulting dynamics depends in a complicated but well understood way on the values of $s_L, s_R \in \bbbr$
\cite{AvGa19,ItTa79,MaMa93}.

In order to connect a typical piecewise-smooth map to (\ref{eq:skewTent}) via a topological conjugacy
that is valid over an interval of parameter values,
equality of the kneading sequences \cite{BuKe17,MiVi91}
may mean this is only possible if we allow $s_L$ and $s_R$ to vary with $\nu$.
A differentiable conjugacy will usually not exist if there are several periodic solutions (for the reasons discussed above).
But what about in simple cases where fixed points are the only invariant sets --- can we obtain a differentiable conjugacy?
The answer, perhaps surprisingly, since the maps themselves are not differentiable, is yes.
We just require that the ratio of the slopes at the kink is the same for both maps \cite{GS2023a}.

\begin{figure}[b!]
\begin{center}
\includegraphics[height=3.6cm]{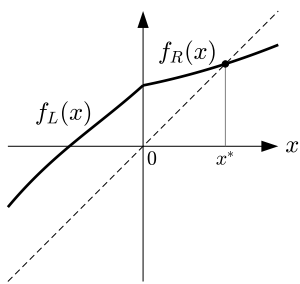}
\caption{
A sketch of a map (\ref{eq:pwsWithoutMu}) discussed in the text.
\label{fig:d}
} 
\end{center}
\end{figure}

To clarify this constraint and show where it comes from, let us derive it with a brief calculation.
Consider a continuous map
\begin{equation}
f(x) = \left\{ \begin{array}{l@{~~}l}
f_L(x), & x \le 0, \\
f_R(x), & x \ge 0,
\end{array} \right.
\label{eq:pwsWithoutMu}
\end{equation}
where $f_L$ and $f_R$ are smooth on $(-\infty,0]$ and $[0,\infty)$
(so derivatives don't blow up at $x=0$).
As indicated in Fig.~\ref{fig:d},
suppose $x^* > 0$ is a fixed point of (\ref{eq:pwsWithoutMu}) with $\lambda = f_R'(x^*) \in (0,1)$
and that $f$ has no other fixed points in an open interval $I$ containing $0$ and $x^*$.
Let 
\begin{equation}
g(y) = \left\{ \begin{array}{l@{~~}l}
g_L(y), & y \le 0, \\
g_R(y), & y \ge 0,
\end{array} \right.
\nonumber
\end{equation}
be another map with the same properties,
i.e.~it has a fixed point $y^* > 0$ with stability multiplier $\lambda$
and no other fixed points in an open interval containing $0$ and $y^*$.

By Sternberg's theorem $f$ and $g$ are differentiably conjugate
on neighbourhoods of $x^*$ and $y^*$.
By Belitskii's lemma (Lemma~\ref{le:Belitskii}) we can extend the conjugacy 
to the left until reaching either $x=0$ or $y=0$.
But in fact we can find a conjugacy $h$ so that $x=0$ and $y=0$ are reached \emph{simultaneously}
by choosing $h$ so that it maps the forward orbit of $x=0$ under $f$ to the forward orbit of $y=0$ under $g$.
That is
\begin{equation}
h(f_R(x)) = g_R(h(x)),
\label{eq:conjugacyR}
\end{equation}
for all $0 \le x \le x^*$, with $h(0) = 0$ and $h(x^*) = y^*$.
Differentiating (\ref{eq:conjugacyR}) gives
\begin{equation}
h'(x) = \frac{h'(f_R(x)) f_R'(x)}{g_R'(h(x))},
\nonumber
\end{equation}
so in particular
\begin{equation}
\lim_{x \to 0^+} h'(x) = \frac{h'(f_R(0)) f_R'(0)}{g_R'(0)}.
\label{eq:rightDeriv}
\end{equation}
By repeating the procedure used in \S\ref{sect:stern},
we extend the domain of $h$ to the left by defining
\begin{equation}
h(x) = g_L^{-1}(h(f_L(x))),
\label{eq:conjugacyL}
\end{equation}
for small $x < 0$.
By construction $h$ provides a conjugacy from $f$ to $g$ on the larger domain
and is differentiable for small $x < 0$ but possibly non-differentiable at $x=0$.
Differentiating (\ref{eq:conjugacyL}) and taking $x \to 0$ from the left gives
\begin{equation}
\lim_{x \to 0^-} h'(x) = \frac{h'(f_L(0)) f_L'(0)}{g_L'(0)}.
\label{eq:leftDeriv}
\end{equation}
By matching (\ref{eq:rightDeriv}) and (\ref{eq:leftDeriv})
and using the fact that $h$ is differentiable at $f_L(0) = f_R(0)$,
we conclude that $h$ is differentiable at $x=0$ if and only if
$\frac{f_L'(0)}{f_R'(0)} = \frac{g_L'(0)}{g_R'(0)}$.
That is, the \emph{slope ratios} at the kinks $x=0$ and $y=0$ are the same for both maps.

We now demonstrate the consequences of this to conjugacies for border-collision bifurcations.
Consider a family of piecewise-smooth maps
\begin{equation}
f(x,\mu) = \left\{ \begin{array}{l@{~~}l}
f_L(x,\mu), & x \le 0, \\
f_R(x,\mu), & x \ge 0.
\end{array} \right.
\label{eq:pws}
\end{equation}
Continuity at $x=0$ implies $f_L(0,\mu) = f_R(0,\mu)$ for all values of $\mu$.
Suppose $x=0$ is a fixed point of (\ref{eq:pws}) with $\mu=0$, i.e.
\begin{equation}
f_L(0,0) = f_R(0,0) = 0.
\label{eq:pwsBCBAssumption}
\end{equation}
Let
\begin{equation}
a_L = \frac{\partial f_L}{\partial x}(0,0), \qquad
a_R = \frac{\partial f_R}{\partial x}(0,0), \qquad
\beta = \frac{\partial f_L}{\partial \mu}(0,0) = \frac{\partial f_R}{\partial \mu}(0,0),
\nonumber
\end{equation}
and suppose
\begin{equation}
a_L > 1, \qquad
0 < a_R < 1, \qquad
\beta > 0.
\label{eq:pwsAssumptions}
\end{equation}
In a neighbourhood of $(x,\mu) = (0,0)$,
for $\mu < 0$ the map has no fixed points,
while for $\mu > 0$ it has two fixed points, Fig.~\ref{fig:e}.
Locally the map is monotone
and as the value of $\mu$ is varied through $0$
the border-collision bifurcation mimics a saddle-node bifurcation.

\begin{figure}[b!]
\begin{center}
\includegraphics[height=3.6cm]{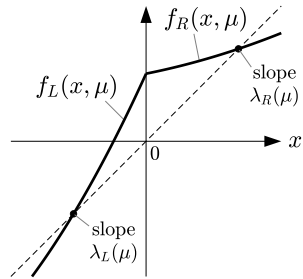}
\caption{
A piecewise-smooth map (\ref{eq:pws}) satisfying (\ref{eq:pwsAssumptions}) for small $\mu > 0$.
\label{fig:e}
} 
\end{center}
\end{figure}

To obtain a differentiable conjugacy between $f$ with $\mu > 0$ and a similar map,
we need to match the stability multipliers of both fixed points and the slope ratio at the kink.
Straight-forward calculations reveal that the fixed points have multipliers
\begin{eqnarray}
\lambda_L(\mu) &=& \textstyle{\frac{\partial f_L}{\partial x}}
+ \left( \textstyle{\frac{\partial^2 f_L}{\partial \mu \partial x}}
+ \frac{\beta \frac{\partial^2 f_L}{\partial x^2}}{1 - \frac{\partial f_L}{\partial x}} \right) \mu + O \left( \mu^2 \right), \nonumber \\
\lambda_R(\mu) &=& \textstyle{\frac{\partial f_R}{\partial x}}
+ \left( \textstyle{\frac{\partial^2 f_R}{\partial \mu \partial x}}
+ \frac{\beta \frac{\partial^2 f_R}{\partial x^2}}{1 - \frac{\partial f_R}{\partial x}} \right) \mu + O \left( \mu^2 \right), \nonumber
\end{eqnarray}
with derivatives evaluated at $(x,\mu) = (0,0)$,
and the slope ratio is
\begin{equation}
S(\mu) = \frac{\frac{\partial f_L}{\partial x}(0,\mu)}{\frac{\partial f_R}{\partial x}(0,\mu)}.
\nonumber
\end{equation}

Thus there are three constraints, and the skew tent map family (\ref{eq:skewTent})
does indeed have three parameters, but tuning the value of $\nu$
does not help us satisfy these constraints because in (\ref{eq:skewTent})
the value of $\nu > 0$ can be scaled to $1$
(since $g(\gamma y, \gamma \nu, s_L, s_R) = \gamma g(y,\nu,s_L,s_R)$ for any $\gamma > 0$).
So instead we can consider
\begin{equation}
g(y,\nu,s_L,s_R,t) = \left\{ \begin{array}{l@{~~}l}
\nu + s_L y + t y^2, & y \le 0, \\
\nu + s_R y, & y \ge 0.
\end{array} \right.
\label{eq:skewSingleQuad}
\end{equation}
If $0 < s_R < 1 < s_L$ then for small $\nu > 0$ the fixed points of $g$ have multipliers
\begin{eqnarray}
\sigma_L(\nu,s_L,t) &=& s_L + \frac{2 t}{1 - s_L} \,\nu + O \left( \nu^2 \right), \nonumber \\
\sigma_R(s_R) &=& s_R \nonumber \,,
\end{eqnarray}
and the slope ratio at $y=0$ is $\frac{s_L}{s_R}$.
Thus $f$ and $g$ are locally conjugate if
\begin{equation}
\lambda_L(\mu) = \sigma_L(\nu,s_L,t), \qquad
\lambda_R(\mu) = \sigma_R(s_R), \qquad
r(\mu) = \frac{s_L}{s_R}.
\nonumber
\end{equation}
It turns out we can solve these to obtain $s_L$, $s_R$, and $t$ as functions of $\mu$,
using also $\nu = \mu$ due to the above scaling property, leading to the following result \cite{GS2023a}.

\begin{theorem}
Let $f$ be a piecewise-$C^3$ map (\ref{eq:pws}) satisfying (\ref{eq:pwsBCBAssumption})--(\ref{eq:pwsAssumptions}),
and $g$ be given by (\ref{eq:skewSingleQuad}).
Then there exists $\delta > 0$, neighbourhoods $N$ and $M$ of $0$,
and continuous functions $F_L, F_R, G : (0,\delta) \to \bbbr$ with
\begin{equation}
F_L(0) = a_L, \qquad
F_R(0) = a_R, \qquad
G(0) = \textstyle{\frac{\beta}{2} \left( \frac{\partial^2 f_L}{\partial x^2}
- \frac{a_L(1-a_L)}{a_R(1-a_R)} \frac{\partial^2 f_R}{\partial x^2} \right) \Big|_{(0,0)}},
\nonumber
\end{equation}
such that $f(x,\mu)|_N$ and $g(y,\mu,F_L(\mu),F_R(\mu),G(\mu))|_M$ are
differentiably conjugate on the basins of their corresponding fixed points for all $\mu \in (0,\delta)$.
\end{theorem}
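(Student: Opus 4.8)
The plan is to reduce the statement to an algebraic problem. By the calculation preceding (\ref{eq:pws}), together with Theorem~\ref{thm:S} and Lemma~\ref{le:Belitskii} (and their reverse-time analogues for repelling fixed points, as noted in \S\ref{sect:stern}), a differentiable conjugacy between two maps of the type (\ref{eq:pws}) on the basins of their corresponding fixed points exists as soon as three matching conditions hold: the attracting fixed points have equal multipliers, the repelling fixed points have equal multipliers, and the slope ratios at the kinks coincide (the last being exactly what forces the conjugacy built on the right branch to be differentiable across the kink). So it suffices to produce continuous $F_L,F_R,G:(0,\delta)\to\bbbr$ for which the choice $(\nu,s_L,s_R,t)=(\mu,F_L(\mu),F_R(\mu),G(\mu))$ realises these three conditions, to check that for such parameters $g$ is genuinely a map of the required type (two fixed points straddling the kink, locally monotone increasing, no other nearby fixed points), and to verify the claimed values of $F_L,F_R,G$ at $\mu=0$.

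First I would dispose of $\nu$: since rescaling $y$ is a linear, hence differentiable, conjugacy, the scaling property of (\ref{eq:skewSingleQuad}) lets us fix $\nu=\mu$ with no loss of generality, leaving the unknowns $s_L,s_R,t$ and the three equations $\lambda_L(\mu)=\sigma_L(\mu,s_L,t)$, $\lambda_R(\mu)=\sigma_R(s_R)$, $r(\mu)=s_L/s_R$. The fixed points $x_L^*(\mu)<0<x_R^*(\mu)$ of $f(\cdot,\mu)$ depend $C^2$-smoothly on $\mu$ (implicit function theorem applied to $f_L(x,\mu)=x$ and $f_R(x,\mu)=x$), so $\lambda_L(\mu),\lambda_R(\mu)$ and $r(\mu)=S(\mu)$ are $C^2$ functions of $\mu$ with $\lambda_L(0)=a_L$, $\lambda_R(0)=a_R$, $r(0)=a_L/a_R$, and their $\mu$-linearizations are those already displayed. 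The equations decouple: $\sigma_R(s_R)=s_R$ forces $F_R(\mu):=\lambda_R(\mu)$, whence $F_R(0)=a_R$; then $r(\mu)=s_L/s_R$ forces $F_L(\mu):=r(\mu)\lambda_R(\mu)$, whence $F_L(0)=(a_L/a_R)a_R=a_L$; both are manifestly $C^2$.

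It remains to solve the first equation for $t$. The left-branch fixed point of $g$ solves $ty^2+(s_L-1)y+\mu=0$, and the root lying near $\mu/(1-s_L)<0$ (hence in $y\le 0$, close to the kink) has multiplier $\sigma_L=1+\sqrt{(1-s_L)^2-4t\mu}$; equating this to $\lambda_L(\mu)$ and solving gives
\[
G(\mu)=\frac{\bigl(1-F_L(\mu)\bigr)^2-\bigl(\lambda_L(\mu)-1\bigr)^2}{4\mu},
\]
which is continuous on $(0,\delta)$. The one genuinely computational point --- and the step I expect to be the main obstacle --- is to show that $G(\mu)$ has a finite limit as $\mu\to0^+$ equal to the stated $G(0)$. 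For this I would factor the numerator as $\bigl(2-F_L(\mu)-\lambda_L(\mu)\bigr)\bigl(\lambda_L(\mu)-F_L(\mu)\bigr)$: the first factor tends to $2(1-a_L)$, while the second is $O(\mu)$ with $\lim_{\mu\to0^+}\mu^{-1}\bigl(\lambda_L(\mu)-F_L(\mu)\bigr)=\lambda_L'(0)-F_L'(0)$. Substituting the $\mu$-linear coefficients of $\lambda_L$, $\lambda_R$ and $r$, a short manipulation cancels all the $\partial^2 f/\partial\mu\partial x$ contributions and leaves exactly $G(0)=\tfrac{\beta}{2}\bigl(\tfrac{\partial^2 f_L}{\partial x^2}-\tfrac{a_L(1-a_L)}{a_R(1-a_R)}\tfrac{\partial^2 f_R}{\partial x^2}\bigr)\big|_{(0,0)}$; the subtlety here is purely that the bookkeeping must reproduce precisely this coefficient.

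Finally I would fix $\delta>0$ small enough that for all $\mu\in(0,\delta)$: $F_L(\mu)>1$ and $F_R(\mu)\in(0,1)$ by continuity from $\mu=0$, $G(\mu)$ stays bounded, the discriminant $(1-F_L(\mu))^2-4G(\mu)\mu=(\lambda_L(\mu)-1)^2$ is positive, and both fixed points $y_L^*(\mu)<0<y_R^*(\mu)$ of $g$ lie in a fixed neighbourhood $M$ of $0$ on which $g$ is monotone increasing with no further fixed points --- all consequences of the limits just established. By construction the attracting pair $x_R^*(\mu),y_R^*(\mu)$ and the repelling pair $x_L^*(\mu),y_L^*(\mu)$ have matching multipliers, and the slope ratios $F_L(\mu)/F_R(\mu)=r(\mu)$ agree, so the construction of \S\ref{sect:pws} --- Theorem~\ref{thm:S} and Lemma~\ref{le:Belitskii} at the attracting fixed point, their reverse-time forms at the repelling one, glued differentiably across the kink --- yields the required differentiable conjugacy between $f(x,\mu)|_N$ and $g(y,\mu,F_L(\mu),F_R(\mu),G(\mu))|_M$ on the basins of the corresponding fixed points, with $N$ any neighbourhood of $0$ containing all the relevant $x$-basins for $\mu\in(0,\delta)$.
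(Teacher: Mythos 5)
Your proposal is correct and follows essentially the same route the paper sketches (the paper itself defers details to \cite{GS2023a}): set $\nu=\mu$ by the scaling property, determine $s_R=\lambda_R(\mu)$ and $s_L=r(\mu)\lambda_R(\mu)$ from the multiplier and slope-ratio conditions, solve the remaining multiplier equation for $t$, and invoke the Sternberg--Belitskii machinery with the slope-ratio condition to glue the conjugacy across the kink. Your explicit formula for $G(\mu)$ and the factorisation $\bigl(2-F_L-\lambda_L\bigr)\bigl(\lambda_L-F_L\bigr)$ do reproduce the stated value of $G(0)$, so the one computational step you flagged as the main obstacle checks out.
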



\section{Chaotic maps}\label{sect:chaotic}

The elliptic curves $y^2=x^3+ax+b$ have many beautiful properties. One of these is that any line tangential to an elliptic curve intersects the curve at one other point (after adding the point at infinity). This process can be iterated: start at a point on the curve, find the point determined by the tangent at the original point, and now find the point determined by the tangent at the new point, and so on. This generates a map by (for example) considering the $x$-coordinates of successive points determined by this process, which gives
\begin{equation}\label{eq:FSS}
F(x)=\frac{x^4-2ax^2-8bx+a^2}{4(x^3+ax+b)},
\end{equation}
see Fig.~\ref{fig:g}.
Equation (\ref{eq:FSS}) is a family of maps parametrized by $a$ and $b$
and it is not hard to show that these are all topologically conjugate to a full shift on two symbols and hence to the standard quadratic map (Chebyshev map)
\begin{equation}\label{eq:T2}
T_2(x)=1-2x^2,  
\end{equation}
on the interval $[-1,1]$, Fig.~\ref{fig:f}.
Glendinning and Glendinning \cite{PGSG2021} show that each of the maps $F$ of (\ref{eq:FSS}), or more accurately a compactification of (\ref{eq:FSS}),
are $C^\infty$-conjugate to each other and to $T_2$. The proof of this statement is based on two observations. First, Jiang \cite{Jiang1995,Jiang1997,Jiang2005} has shown that chaotic unimodal maps which are topologically conjugate, have the same types of turning points (in this case quadratic), and for which corresponding periodic orbits have the same multipliers, are $C^\infty$-conjugate. The first two criteria are easy to establish for the maps defined by (\ref{eq:FSS}) so it is the infinite set of equalities of corresponding multipliers that presents a challenge. This is solved by the following lemma.

\begin{figure}[b!]
\begin{center}
\includegraphics[height=3.6cm]{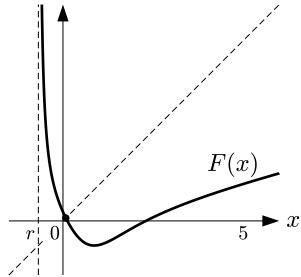}
\caption{
The map (\ref{eq:FSS}) on $(r,\infty)$, where $r$ is the vertical asymptote, derived from elliptic curves with $a = b = 1$.
After compactification by a real M\"obius transformation this is conjugate to the Chebyshev map (\ref{eq:T2}) on $(-1,1)$.
\label{fig:g}
} 
\end{center}
\end{figure}

\begin{figure}[b!]
\begin{center}
\includegraphics[height=3.6cm]{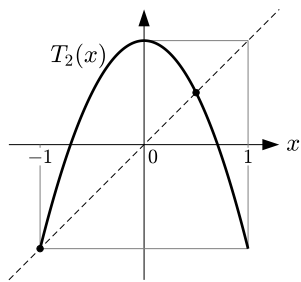}
\caption{
The Chebyshev map (\ref{eq:T2}).
\label{fig:f}
} 
\end{center}
\end{figure}

\begin{lemma}\label{lem:PGSG}\cite{PGSG2021}~Let $F$ be defined by (\ref{eq:FSS}) and $H(x)=x^3+ax+b$. Then
\begin{equation}\label{eq:ratF}
H(F(x))=\frac{1}{4}[ F^\prime (x)]^2H(x) .
\end{equation}\end{lemma}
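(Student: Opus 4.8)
\section*{Proof proposal}

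The plan is to read identity (\ref{eq:ratF}) as a consequence of the elliptic‑curve construction that produced $F$: up to the factor $\tfrac14$ it says that doubling on the curve $y^2 = H(x)$ scales the invariant differential $dx/y$ by $2$. First I would put $F$ in a form adapted to the curve $E : y^2 = H(x)$, $H(x) = x^3 + ax + b$. With $H'(x) = 3x^2 + a$ a one‑line calculation gives
\[
F(x) = \frac{H'(x)^2 - 8x H(x)}{4H(x)} = \frac{H'(x)^2}{4H(x)} - 2x ,
\]
which is exactly the $x$‑coordinate $x_2$ of $2P$ for $P = (x,y) \in E$: indeed $x_2 = m^2 - 2x$ with $m = H'(x)/(2y)$ the tangent slope at $P$, and $m^2 = H'(x)^2/(4y^2) = H'(x)^2/(4H(x))$, so $x_2$ is the rational function of $x$ just displayed. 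The chord--tangent construction also produces the $y$‑coordinate of $2P$, namely
\[
y_2 = \frac{H'(x)}{2y}\,(x - F(x)) - y ,
\]
and, since $2P \in E$, we have $y_2^2 = H(F(x))$.

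The core of the argument is the claim $y_2 = \tfrac12\,F'(x)\,y$. Granting this, squaring and substituting $y^2 = H(x)$ together with $y_2^2 = H(F(x))$ yields (\ref{eq:ratF}) immediately. To establish the claim I would differentiate the rational expression for $F$ directly, obtaining
\[
F'(x) = \frac{H'(x)\bigl(2H''(x)H(x) - H'(x)^2\bigr) - 8H(x)^2}{4H(x)^2},
\]
expand $y_2$ from the formula above using $y^2 = H(x)$, and compare the two. After cancelling the common factors $H'(x)$ and $H(x)$, the required equality collapses to $H''(x) = 6x$, which is immediate because $H$ is the specific cubic $x^3 + ax + b$. (Equivalently, one may simply quote the standard fact that $[2]^{*}(dx/y) = 2\,dx/y$ on $E$, which is precisely $F'(x) = 2 y_2/y$.)

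The one point that needs care is the bookkeeping with $y$: the manipulations above are identities in the function field of $E$, or may be performed on the dense open set where $H(x)H'(x) \ne 0$, after which (\ref{eq:ratF}) --- an identity between rational functions of $x$ with parameters $a,b$ --- holds identically. I expect this to be the main thing to pin down; the algebra itself is light once everything reduces to $H'' = 6x$. A computation‑only alternative sidesteps $y$ altogether: writing $N(x) = x^4 - 2ax^2 - 8bx + a^2$ so that $F = N/(4H)$, identity (\ref{eq:ratF}) is equivalent to the polynomial identity $N^3 + 16aNH^2 + 64bH^3 = (N'H - NH')^2$ between polynomials of degree $12$, verifiable by direct expansion --- correct, but considerably less transparent.
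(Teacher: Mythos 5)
Your proposal is correct, but it takes a genuinely different route from the paper, which simply declares the identity to be established ``by brute-force calculation'' verified in {\tt Mathematica} (essentially your final remark: expanding the degree-$12$ polynomial identity $N^3 + 16aNH^2 + 64bH^3 = (N'H - NH')^2$ with $N = x^4 - 2ax^2 - 8bx + a^2$). Your argument instead exploits the provenance of $F$: you correctly identify $F(x) = H'(x)^2/(4H(x)) - 2x$ as the $x$-coordinate of the duplication map on $E: y^2 = H(x)$, and reduce (\ref{eq:ratF}) to the claim $y_2 = \tfrac12 F'(x)\,y$, i.e.\ the invariance $[2]^*(dx/y) = 2\,dx/y$ of the holomorphic differential. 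I checked the key cancellation: writing the claim as $H'(x)\,(x - F(x)) - 2H(x) = F'(x)\,H(x)$ and substituting the rational expressions, the two sides differ by $3xH' - \tfrac12 H'H''$, which vanishes precisely because $H'' = 6x$ for the depressed cubic --- so your computation goes through, and your caveat about working on the open set where $H H' \ne 0$ (or in the function field of $E$) is the right way to legitimise the division by $y$. What your approach buys is an explanation of \emph{why} the multiplier of $\mathcal{G}$ in the Misiurewicz relation (\ref{eq:Mi}) is exactly $\tfrac14 (F')^2$ --- it is the square of the derivative of multiplication-by-$2$ on the differential $dx/y$, whence the factor $4^p = (2^2)^p$ in (\ref{eq:multF}) --- whereas the paper's brute-force verification gives no such insight; the cost is that your argument is specific to the elliptic-curve construction, while the paper's computational method transfers directly to Lemmas~\ref{lem:quad} and \ref{lem:U1}, where no group law is available and the analogous $H$ was found by symbolic search.
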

      
The proof is by brute-force calculation, also verified using the symbolic manipulation packages of {\tt Mathematica} \cite{Wolfram}.
This has the immediate consequence that if $\{x_1, \dots x_p\}$ is a period-$p$ orbit of $F$ and $H(x_n)\ne 0$ for each $n$, 
then the stability multiplier of the periodic orbit is $\lambda$ with 
\begin{equation}\label{eq:multF}
\lambda^2=\prod_{n=1}^p [ F^\prime (x_n)]^2=4^p\frac{H(x_1)}{H(x_p)}\prod_{n=1}^{p-1}\frac{H(x_{n+1})}{H(x_n)}=4^p.
\end{equation}
In other words, provided $H \ne 0$ at periodic points,
the modulus of the multiplier of every period-$p$ orbit
is $2^p$, and this is independent of the parameters $a$ and $b$.
There are special points which need to be checked by hand:
in this case the endpoints where the derivative at the fixed point is $4$ and not $2^1$.

The usual proof that the same is true for $T_2$ uses the conjugacy to the tent map with slopes $\pm 2$ \cite{Devaney},
but here let us demonstrate this using the same idea as Lemma~\ref{lem:PGSG}. 

\begin{lemma}\label{lem:quad}Let $H_2(x)=1-x^2$. Then
\begin{equation}\label{eq:ratT2}
H_2(T_2(x))=\frac{1}{4}[ T_2^\prime (x)]^2H_2(x).
\end{equation}\end{lemma}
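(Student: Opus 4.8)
The plan is to prove Lemma~\ref{lem:quad} by direct substitution, exactly in the spirit of Lemma~\ref{lem:PGSG} but with a computation that is short enough to carry out by hand. First I would record the two ingredients: $T_2(x) = 1 - 2x^2$ and $T_2'(x) = -4x$, so that $[T_2'(x)]^2 = 16x^2$ and hence the claimed right-hand side is $\frac14 \cdot 16x^2 \cdot (1 - x^2) = 4x^2(1-x^2) = 4x^2 - 4x^4$. Then I would expand the left-hand side: $H_2(T_2(x)) = 1 - (1 - 2x^2)^2 = 1 - (1 - 4x^2 + 4x^4) = 4x^2 - 4x^4$. The two expressions agree identically as polynomials in $x$, which establishes (\ref{eq:ratT2}).

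An alternative route, which I would mention as a sanity check rather than a full second proof, is to factor: $H_2(x) = 1 - x^2 = (1-x)(1+x)$, and $1 - T_2(x) = 2x^2$, $1 + T_2(x) = 2 - 2x^2 = 2(1-x^2) = 2(1-x)(1+x)$, so $H_2(T_2(x)) = 4x^2(1-x)(1+x) = 4x^2 H_2(x)$; comparing with $\frac14 [T_2'(x)]^2 = \frac14 (16x^2) = 4x^2$ gives the same identity. This factored form also makes transparent why the identity should be expected: $T_2$ is (a rescaling of) the Chebyshev polynomial $\cos(2\theta) = 1 - 2\sin^2\theta$ under $x = \sin\theta$, and $1 - x^2 = \cos^2\theta$ is essentially the invariant density weight, so the relation (\ref{eq:ratT2}) is the polynomial shadow of the chain rule for $\theta \mapsto 2\theta$.

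There is essentially no obstacle here: the computation is a one-line polynomial identity, in contrast to Lemma~\ref{lem:PGSG} where the analogous check for the quartic rational map $F$ genuinely benefits from a symbolic algebra package. The only point that warrants a remark is the parallel with the elliptic-curve case: just as (\ref{eq:ratF}) forces $\lambda^2 = 4^p$ for every period-$p$ orbit of $F$ avoiding the zeros of $H$, the identity (\ref{eq:ratT2}) gives, for any period-$p$ orbit $\{x_1,\dots,x_p\}$ of $T_2$ with $H_2(x_n) \neq 0$,
\begin{equation}
\lambda^2 = \prod_{n=1}^p [T_2'(x_n)]^2 = 4^p \prod_{n=1}^p \frac{H_2(T_2(x_n))}{H_2(x_n)} = 4^p \frac{H_2(x_1)}{H_2(x_p)} \prod_{n=1}^{p-1} \frac{H_2(x_{n+1})}{H_2(x_n)} = 4^p,
\nonumber
\end{equation}
so that $|\lambda| = 2^p$, matching the multipliers computed for $F$. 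The fixed point $x = -1$ (where $H_2 = 0$) is the exceptional point that must be checked separately, and there $T_2'(-1) = 4$, consistent with the $p=1$ endpoint caveat noted after (\ref{eq:multF}). I would close by observing that Lemma~\ref{lem:quad}, together with Lemma~\ref{lem:PGSG} and Jiang's theorem, is precisely what licenses the claimed $C^\infty$-conjugacy between each $F$ and $T_2$.
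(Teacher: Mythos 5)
Your direct substitution in the first paragraph is exactly the paper's proof: expand $H_2(T_2(x)) = 1-(1-2x^2)^2 = 4x^2-4x^4$ and compare with $\frac14[T_2'(x)]^2H_2(x) = 4x^2(1-x^2)$. The additional remarks (the factored form, the Chebyshev interpretation, and the multiplier consequence) are correct but go beyond what the lemma requires; the core argument matches the paper.
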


\begin{proof}
By straightforward calculation
\[
H_2(T_2(x))=1-(1-2x^2)^2=4x^2-4x^4,
\]
and since $T_2^\prime (x)=-4x$,
\[
\frac{1}{4}[ T_2^\prime (x)]^2H_2(x)=4x^2(1-x^2).
\]
\emph{End of proof.}
\end{proof}

Note that at $x=\pm 1$, $H(x)=0$ and so the argument using (\ref{eq:multF}) does not hold.
As before at the fixed point $x=-1$ the derivative is different: again it is $4$ as can be checked by hand.
In the case of the rescaled quadratic map $\tilde{T}(x)=4x(1-x)$ on $[0,1]$ the equivalent $H$ function is $\tilde{H}(x)=x(1-x)$ and (\ref{eq:ratT2}) holds with $(T_2,H_2)$ replaced by $(\tilde{F},\tilde{H})$. This will be useful below.

Lemmas~\ref{lem:PGSG} and \ref{lem:quad} suggest a much stronger principle at work. Misiurewicz \cite{Mi22} has pointed out that if a functional relationship of the form
\begin{equation}\label{eq:Mi}
H(F(x))={\mathcal G}(F^\prime(x))H(x)
\end{equation}
holds for a family of topologically conjugate maps $F$ then the equal multipliers at corresponding periodic orbits 
condition holds. This shows that the relationship (\ref{eq:ratF}) of \cite{PGSG2021} should have been no surprise. Armed with this insight it was possible to find (\ref{eq:ratT2}) with ease, and it provides a method for determining whether other examples have similar properties. 

As pointed out in \cite{PGSG2021}, the examples of \emph{exactly solvable} maps of Umeno \cite{Umeno1997,Umeno1999} bear strong similarities to the elliptic curve example (\ref{eq:FSS}). These maps are constructed using hypergeometric function theory to have ergodic measures that can be written down explicitly. A first example \cite{Umeno1997} is the iteration of the one parameter family of Katsura-Fukuda maps, $F_{\ell}:[0,1]\to [0,1]$ defined by 
\begin{equation}\label{eq:U1}
F_{\ell}(x)=\frac{4x(1-x)(1-\ell x)}{(1-\ell x^2)^2},
\end{equation}
with $\ell\in [0,1)$, Fig.~\ref{fig:h}. If $\ell =0$ this is the rescaled quadratic map $\tilde{T}(x)$
and the maps are constructed so that the invariant measure has density
\begin{equation}\label{eq:U1dens}
\rho_{\ell}=\frac{1}{2K(\ell )\sqrt{ x(1-x)(1-\ell x)}},
\end{equation}
where the normalization constant is the elliptic integral of the first kind \cite{Umeno1997}.

\begin{figure}[b!]
\begin{center}
\includegraphics[height=3.6cm]{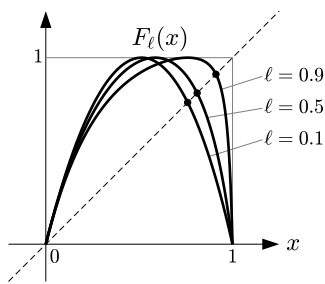}
\caption{
The Katsura-Fukuda map (\ref{eq:U1}) for three different values of $\ell$.
\label{fig:h}
} 
\end{center}
\end{figure}

In this case it is possible to go through the same analysis as \cite{PGSG2021}, though the functions are more complicated than those of Lemma~\ref{lem:quad} and were discovered using {\tt Mathematica} \cite{Wolfram}. 

\begin{lemma}\label{lem:U1}Let $F_{\ell}$ be defined by (\ref{eq:U1}) and let $H_{\ell}(x)=x(1-x)(1-\ell x)$ then
\begin{equation}\label{eq:ratU1}
H_{\ell}(F_{\ell}(x))=\frac{1}{4}[ F_{\ell}^\prime (x)]^2H_{\ell}(x) .
\end{equation}\end{lemma}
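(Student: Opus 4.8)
The plan is to verify the identity (\ref{eq:ratU1}) by direct algebraic computation, exactly in the spirit of the proofs of Lemmas~\ref{lem:quad} and \ref{lem:PGSG}. First I would compute the left-hand side $H_\ell(F_\ell(x))$. Writing $F_\ell(x) = \frac{N(x)}{D(x)}$ with $N(x) = 4x(1-x)(1-\ell x)$ and $D(x) = (1-\ell x^2)^2$, we have
\[
H_\ell(F_\ell(x)) = \frac{N}{D}\left(1 - \frac{N}{D}\right)\left(1 - \frac{\ell N}{D}\right) = \frac{N\,(D - N)\,(D - \ell N)}{D^3}.
\]
So the first task is to factor $D - N = (1-\ell x^2)^2 - 4x(1-x)(1-\ell x)$ and $D - \ell N = (1-\ell x^2)^2 - 4\ell x(1-x)(1-\ell x)$ as polynomials in $x$. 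One expects these to factor nicely; in particular, since $F_\ell$ maps $[0,1]$ to itself with $F_\ell(0)=F_\ell(1)=0$, the combination $D-N$ should contain a perfect-square factor that becomes the numerator structure on the right-hand side.

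Second, I would compute the right-hand side. Differentiating $F_\ell = N/D$ gives $F_\ell'(x) = \frac{N'D - ND'}{D^2}$, so $[F_\ell'(x)]^2 = \frac{(N'D - ND')^2}{D^4}$ and
\[
\frac{1}{4}[F_\ell'(x)]^2 H_\ell(x) = \frac{(N'D - ND')^2\, x(1-x)(1-\ell x)}{4 D^4} = \frac{(N'D - ND')^2\, N}{16 D^4},
\]
using $x(1-x)(1-\ell x) = N/4$. Comparing with the left-hand side, the identity (\ref{eq:ratU1}) is therefore equivalent to the purely polynomial statement
\[
16\, D\, (D-N)(D-\ell N) = (N'D - ND')^2,
\]
after cancelling the common factor $N/D^3$ (valid wherever $N \neq 0$, and then by continuity everywhere). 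Both sides are polynomials of the same degree in $x$ (and in $\ell$), so it remains to expand and check equality of coefficients.

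Third, rather than grind out the full expansion by hand, I would present it as a finite computation --- either carried out by hand with the aid of the factorizations from step one, or verified symbolically in {\tt Mathematica} \cite{Wolfram}, consistently with how Lemma~\ref{lem:PGSG} was established. The main obstacle is purely bookkeeping: $(N'D - ND')^2$ is a square of a moderately high-degree polynomial in two variables $x$ and $\ell$, so the verification is tedious though entirely mechanical. A cleaner route, which I would look for first, is to find the explicit factorization $D - N = P(x)^2$ for some polynomial $P$ (and similarly understand $D - \ell N$), since $F_\ell$ was designed by Umeno via hypergeometric/elliptic function theory so that the associated density (\ref{eq:U1dens}) is invariant --- this invariance is essentially the infinitesimal shadow of the identity (\ref{eq:ratU1}), and exploiting it would replace brute force with structure. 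Once the polynomial identity is confirmed, the lemma follows, and the consequence (as in (\ref{eq:multF})) is that every period-$p$ orbit of $F_\ell$ avoiding the zeros of $H_\ell$ has multiplier of modulus $2^p$, independent of $\ell$.
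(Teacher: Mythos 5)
Your proposal is correct and takes essentially the same route as the paper, which establishes the identity by direct (Mathematica-assisted) brute-force calculation in the manner of Lemmas~\ref{lem:PGSG} and~\ref{lem:quad}; your reduction to the polynomial identity $16\,D\,(D-N)(D-\ell N)=(N'D-ND')^2$ is a clean way to organise that computation. Your guess about the structure is also right: one finds $D-N=(1-2x+\ell x^2)^2$ and $D-\ell N=(1-2\ell x+\ell x^2)^2$, after which the verification is immediate.
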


If $\ell =0$ we recover Lemma~\ref{lem:quad}. Noting that $F_{\ell}^\prime (0) =4$ to deal with the case $H_\ell (0)=0$ we have the equivalent result to that of \cite{PGSG2021} for the family $F_\ell$. 

\begin{theorem}\label{thm:equivU1}For all $\ell, \ell^\prime \in [0,1)$, $F_{\ell}:[0,1]\to [0,1]$ is $C^\infty$-conjugate to $F_{\ell^\prime}:[0,1]\to [0,1]$. \end{theorem}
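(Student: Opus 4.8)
The plan is to verify the hypotheses of Jiang's rigidity theorem quoted above and to use transitivity of $C^\infty$-conjugacy, comparing every $F_\ell$ to the single reference map $\tilde{T}(x)=4x(1-x)$; since a composition of $C^\infty$ diffeomorphisms is again a $C^\infty$ diffeomorphism, $F_\ell \sim_{C^\infty} \tilde{T} \sim_{C^\infty} F_{\ell'}$ gives the claim. Jiang's theorem asks for three things: (i) $F_\ell$ and $\tilde{T}$ are topologically conjugate chaotic unimodal maps; (ii) their turning points are of the same type, here quadratic; and (iii) corresponding periodic orbits carry the same multiplier.

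First I would dispatch (i) and the non-degeneracy in (ii) by direct inspection of (\ref{eq:U1}). For $\ell \in [0,1)$ the denominator $(1-\ell x^2)^2$ is bounded away from $0$ on $[0,1]$, so $F_\ell$ is a genuine $C^\infty$ (indeed real-analytic) map $[0,1]\to[0,1]$; one computes $F_\ell(0)=F_\ell(1)=0$, that $F_\ell$ has a unique critical point $c_\ell \in (0,1)$ with $F_\ell(c_\ell)=1$, and that $F_\ell''(c_\ell)\neq 0$, so $F_\ell$ is unimodal with a quadratic turning point. The forward orbit of the critical value is $1 \mapsto 0 \mapsto 0 \mapsto \cdots$, so the kneading sequence is the maximal one, the same as that of $\tilde{T}$; hence by kneading theory $F_\ell$ and $\tilde{T}$ are topologically conjugate (in particular $F_\ell$ is a Misiurewicz map, hence chaotic), via an order-preserving homeomorphism that is canonical on periodic itineraries, so that ``corresponding periodic orbit'' is well defined. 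This settles (i) and (ii).

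The substance is (iii), and it is exactly what Lemma~\ref{lem:U1} is for. If $\{x_1,\dots,x_p\}$ is a period-$p$ orbit of $F_\ell$ with $H_\ell(x_n)\neq 0$ for all $n$, then applying (\ref{eq:ratU1}) along the orbit telescopes, precisely as in (\ref{eq:multF}), to give $\lambda^2 = \prod_{n}[F_\ell'(x_n)]^2 = 4^p$. Since such an orbit lies in $(0,1)$ the multiplier is real, and its sign is $(-1)^{\#\{n:\,x_n>c_\ell\}}$, a count of symbols in one period of the itinerary and therefore a topological invariant shared with the corresponding orbit of $\tilde{T}$; hence the full multiplier, modulus and sign, matches that of the corresponding period-$p$ orbit of $\tilde{T}$, for which Lemma~\ref{lem:quad} (in the rescaled form noted in the text for $\tilde{T}$) gives exactly $(-1)^{\#R}\,2^p$. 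The zeros of $H_\ell$ in $[0,1]$ are only the endpoints $x=0$ and $x=1$ (the third root $1/\ell$ exceeds $1$): the point $x=1$ is pre-fixed, not periodic, and likewise $c_\ell$ is pre-fixed, so neither needs attention; the single periodic orbit escaping the telescoping argument is the fixed point $x=0$, where one checks by hand that $F_\ell'(0)=4=\tilde{T}'(0)$. With (i)--(iii) established, Jiang's theorem yields a $C^\infty$-conjugacy between $F_\ell$ and $\tilde{T}$, and composition proves the theorem.

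I expect the genuine obstacle to be not any one computation but the bookkeeping that makes Jiang's hypotheses literally applicable: confirming uniformly in $\ell$ that the turning point and its forward orbit are placed so that all the $F_\ell$ really are topologically conjugate to one fixed model, and isolating exactly which periodic points evade the identity (\ref{eq:ratU1})—only the endpoint fixed point—so the finitely many exceptional multipliers can be matched directly. Everything quantitative beyond that is the content of Lemmas~\ref{lem:U1} and~\ref{lem:quad}, already in hand.
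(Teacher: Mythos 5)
Your proposal is correct and follows essentially the same route as the paper: invoke Jiang's rigidity theorem for topologically conjugate chaotic unimodal maps with quadratic turning points, use Lemma~\ref{lem:U1} and the telescoping identity (\ref{eq:multF}) to match the multipliers of all periodic orbits off the zero set of $H_\ell$, and check the exceptional fixed point $x=0$ by hand via $F_\ell'(0)=4$. Your additional care with the sign of the multiplier (as a topological invariant of the itinerary) and with verifying that the critical value is $1$ uniformly in $\ell$ fills in details the paper delegates to \cite{PGSG2021}, but the argument is the same one.
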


The proof follows the proof in \cite{PGSG2021} with Lemma~\ref{lem:PGSG} replaced by Lemma~\ref{lem:U1}. And of course this means that the Katsura-Fukuda maps are $C^\infty$-conjugate to $T_2$ and to $F$ (\ref{eq:FSS}) for every $(a,b)\in \bbbr^2$.

\section{Conclusion}\label{sect:concl}

In applications differentiable conjugacies have mostly been used in the following three ways. First, as initial scaling or translations, 
for example to non-dimensionalize a problem or to simplify the parameterization. Second, as part of a 
linearization process, making it possible to deduce details of local behaviour, which may later also be used 
in the study of other phenomena such as global bifurcations, e.g. \cite{Wiggins}. Third,  in the identification 
of the important nonlinear terms near a non-hyperbolic fixed point as a precursor to a deformation argument 
to capture local behaviour at a bifurcation point e.g. \cite{G&H,Kuznetsov}. 

In this paper we have shown that differentiable conjugacies have broader applications.
Bifurcations theorems in most textbooks describe the local dynamics away from the bifurcation 
point via \emph{topological} conjugacy \cite{G&H,Kuznetsov,Wiggins}. This is because of the difficulty
presented by having more than one fixed point locally. The analysis of 
\cite{GS2022,GS2023,Ily1993} shows that this can be made into a differentiable conjugacy on
basins of attraction and repulsion of the fixed points. Moreover, as shown explicitly in \cite{GS2022,GS2023},
the model equations (\emph{extended normal forms}) involve the addition of one or two extra terms 
whose coefficients are appropriate functions of the bifurcation parameter to
the standard truncated normal forms, see section~\ref{sect:bifn}. This idea can be extended to bifurcations in
piecewise-smooth systems as shown in section~\ref{sect:pws}.

In section~\ref{sect:chaotic} we also showed that differentiable conjugacies could be used to show a closer
relationship between some specially constructed examples. This allowed us to reveal commonality between maps
based on elliptic curves \cite{PGSG2021} and the exactly solvable chaotic maps of Umeno \cite{Umeno1997,Umeno1999}. 
These results suggest that further work on differentiable conjugacies
may produce new connections between systems for which only topological conjugacy has been established hitherto.

\section*{Acknowledgements}
\setcounter{equation}{0}

The authors were supported by Marsden Fund contract MAU1809,
managed by Royal Society Te Ap\={a}rangi.

%

\end{document}